\theoremstyle{plain}
\newtheorem{thm}{Theorem}[section]
\newtheorem{prop}[thm]{Proposition}
\theoremstyle{definition}
\newtheorem{defn}{Definition}[section]
\theoremstyle{remark}
\newtheorem*{rem}{Remark}
\title{RISK MEASURES AND CREDIT RISK UNDER THE BETA-KOTZ DISTRIBUTION}
\author{M. ANDREA ARIAS-SERNA, FRANCISCO J. CARO-LOPERA,\\
	 JEAN-MICHEL LOUBES}
\begin{document}

\maketitle

\begin{abstract}
	This paper considers the use for Value-at-Risk computations of the so-called Beta-Kotz distribution based on a general family of distributions including the classical Gaussian model. Actually, this work develops a new method for estimating the Value-at-Risk, the Conditional Value-at-Risk and the Economic Capital when the underlying risk factors follow a Beta-Kotz distribution. After estimating the parameters of the distribution of the loss random variable,  both analytical for some particular values of the parameters and numerical approaches are provided for computing these mentioned measures. The proposed risk measures are finally applied for quantifying the potential risk of economic losses in Credit Risk.
	
keywords: Value-at-Risk; Conditional Value-at-Risk; Economic Capital; Credit Risk; Gauss hypergeometric function; Beta-Kotz distribution.

\end{abstract}

\section{Introduction}	

There is a variety of risk measures based on loss distributions to quantify the different types of risk; examples include the variance, the Value-at-Risk and the Conditional Value-at-Risk, see  \cite{Guegan(2012)}, \cite{McNeil(2015)}, \cite{Wagalath (2018)}, there is few work regarding  the pdf schemes. These include the standard Normal distribution \cite{Jorion(2007)}, \cite{Alexander(2008)}, which does not account for fat-tails and is symmetric, the Student-t distribution \cite{Lin(2006)}, which is fat tailed but symmetric, and the generalized error distribution (GED), which is more flexible than the Student-t including both fat and thick tails. These risk measures are typically statistical quantities describing the conditional or unconditional loss distribution of the portfolio over some predetermined time horizon. Since it is in general very difficult, if not impossible, to obtain analytically tractable expressions for the distribution function of the random variable, usually these risk measures are estimates by generating random samples from its distribution and computing the corresponding empirical quantiles.

As assert \cite{Crouhy et al. (2000)} while it was legitimate to assume normality of the portfolio changes due to market risk, it is no longer the case of credit returns which are by nature highly skewed and fat-tailed. Indeed, there is limited upside to be expected from any improvement in credit quality, while there is substantial downside consecutive to downgrading and default. The percentile of the distribution cannot be any longer estimated from the mean and variance only. The calculation for measure the credit risk requires simulating the full loss distribution of the changes in portfolio value.

In this paper, a Beta-Kotz distribution to model the credit loss rate in event of default is considered. The Beta-Kotz distribution modelling a wide class of data with different shapes in close domains, the distribution can be strongly right-skewed or less skewed as the parameters approach each other, also the distributions would be left-skewed if the parameters values were switched \cite{Wang(2005)}. We developed methods for the Value-at-Risk, Conditional Value-at-Risk and Economic Capital calculations that exploit the properties of the unique zero of the Gauss hypergeometric function. Certain restrictions on the values of the parameters of the Beta-Kotz distribution that allow us to solve the problem analytically are considered, that is, values for which it is possible to obtain  analytical results on the zeros of the Gauss hypergeometric functions, then, we deal with the numerical calculation of the real zeros of the Gaussian hypergeometric function. There are several methods proposed in the literature for the calculation of zeros of hypergeometric functions, some proposed methods are: Newton method and method based on the division algorithm \cite{Dominici(2013)}, asymptotic estimates \cite{Srivastava(2011)} \cite{Duren(2001)}, and Matrix methods \cite{Ball(2000)}. We will be based on the method of Newton; this method has the advantage that it is easy to understand and has a complete implementation in the routines packages of the R software.  It also has internal routines for evaluating functions hypergeometric see \cite{Fox(2016)} and \cite{Kasper (2017)} and references them.  The mentioned risk measures on credit risk framework to the financial analysis of the risk and performance of a financial institution are applied. The framework first quantifies the probability distributions of the loss given default derived from credit risk, then the risk measures are calculated using the approaches developed throughout the paper.  We want to stress however that the quantitative modelling of credit risk is just a motivating example where the techniques discussed in our paper can be applied naturally. The results obtained can be applied much more widely in banking.

The rest of this paper is organized as follows. Section 2 provides some risk measures based on the portfolio loss distributions. Section 3 derives a generalized Beta distribution, under the elliptical model. Section 4 defines the Beta-Kotz Value-at-Risk and demonstrates that it is unique in the $(0,1)$ interval. Section 5 uses some classical analytical methods with a view to studying the behaviour of risk measures seen as the zeros of the Gauss Hypergeometric polynomials. Section 6 provides numerical solutions for the values of risk measures may be viewed as the zeros of the Gauss Hypergeometric function. Section 7 Provides a description of the method of moments and maximum likelihood for the estimation of the Beta-Kotz parameters. Section 8 presents the case study on the credit risk framework. Section 9 presents the demonstrations of the proposed theorems and propositions. Section 10 provides the concluding remarks.

\section{Preliminary}

This section aims to familiarize the reader with some risk measures based on the portfolio loss distributions, these are typically statistical quantities describing the conditional or unconditional loss distribution of the portfolio over some predetermined time horizon. Let us introduce some basic concepts and notations.

Given the a random variable $X$ in a probability space, we denote its distribution function (d.f.) by $F_X$, unless otherwise stated, $F_{X}^{-1}$ is the ordinary inverse of $F_{X}$. $X$ is called the loss, hence $E(X)$ is called the expected loss.

Probably, the most commonly used risk measure in the financial area is the Value-at-Risk, which is defined in \cite{Rockafellar(2002)} as

\begin{defn} \label{VaR}
	The Value-at-Risk $(VaR_{\alpha}(X))$ for a portfolio with loss variable $X$ at the confidence level $\alpha \in (0,1)$ is a real number such that
	\begin{equation}
	F_{X}(VaR_{\alpha}(X))=P(X\leq VaR_{\alpha}(X))=\alpha.
	\end{equation}
\end{defn}
The determination of the $VaR_{\alpha}(X)$ is completely embedded in the knowledge of their distribution risk factors. This remark entails that the estimation of the Value-at-Risk should be carried determining the parameters of the distribution function of risk factors which, model the changes in the underlying risk factors. 

If $\hat{X}=\frac{X-\mu}{\sigma}$ and having estimated the unknown parameters of the model, the $VaR_{\alpha}(X)$ for the $\alpha$-percentile of the assumed distribution can be calculated straightforward using the equation (Tang \& Shieh, 2006).
$$VaR_{\alpha}(X)=\mu+F_{\hat{X}}^{-1}(\alpha)\sigma.$$

In the literature, there are few works related to others location-scale distribution family, among them the following are known.

If the loss $X$ has a normal distribution, with mean $\mu$ and standard deviation $\sigma$, then the $VaR_{\alpha}(X)$ is calculated as $$VaR_{\alpha}(X)=\mu+\sigma \Phi^{-1}(\alpha),$$
where $\Phi^{-1}$ is the inverse of the Normal standar distribution. see\cite{Jorion(2007)}, and \cite{Alexander(2008)} and references therein, for more details\\

If the loss $X$ has a Student t distribution with $\nu$ degrees of freedom, then

$$VaR_{\alpha}(X)=\mu+\sigma t^{-1}_{\upsilon}(\alpha),$$
where $t_{\upsilon}$ denotes the distribution function of standard $t$, see \cite{McNeil(2015)}, \cite{Glasserman(2002)} and references therein.

If $\hat{X}$ has a Pearson IV distribution, then the $VaR_{\alpha}(X)$ is calculated as

$$VaR_{\alpha}(X)=\mu+\sigma F^{-1}_{Pearson IV}(\alpha),$$ where at $F^{-1}_{Pearson IV}(\alpha)$, the inverse of the cumulative Pearson IV distribution at the specific confidence level is understood, see \cite{Stavroyiannis(2012)} for more details.

Usually, the approaches described are under the assumption of constant volatility over time. Hence, it is possible to incorporate models describing non-constant volatilities. In practice, there are numerous ARCH and GARCH models that can be chosen from, see \cite{Manganelli(2004)}, \cite{Stavroyiannis(2012)}, \cite{Han(2014)}, \cite{Gabrielsen(2015)} references therein.

Although the Value-at-Risk so defined is able to reflect risk aversion, it lacks some desirable properties such as subadditivity, which is the mathematical statement of the response of risk concentration, a basic reality in risk management. Among other objections raised on $VaR_{\alpha}(X)$ we can also mention that it is unable to account for the consequences of the established threshold being surpassed and that, in general, it is not continuous on the parameter $\alpha$, \cite{Arias(2016)}.

A measure of risk, closely related to $VaR_{\alpha}(X)$ is the Conditional Value-at-Risk $(CVaR_{\alpha}(X))$, defined as the conditional expected value of the $(1-\alpha)-tail$, it is defined in \cite{Rockafellar(2000)} as follows.

\begin{defn}
The Conditional Value-at-Risk of the loss associated at confidence level $\alpha\in (0,1)$ is the mean of the $\alpha$-tail loss distribution
\begin{equation}\label{CVaR}
CVaR_{\alpha}(X)=\dfrac{1}{1-\alpha}\int_{\alpha}^{1}VaR_{\nu}(X)d\nu.
\end{equation}
\end{defn}
If the loss distribution is normal with mean $\mu$ and variance $\sigma^{2}$, then
$$CVaR_{\alpha}(X)=\ \mu+\sigma\dfrac{\phi(\Phi^{-1}(\alpha))}{1-\alpha},$$
where $\phi$ is the density of the standar normal distribution.

If the loss $X$ has a standard t distribution with $\nu$ degrees of freedom, then

$$CVaR_{\alpha}(X)=\dfrac{g_{\nu}(t^{-1}_{\nu}(\alpha))}{1-\alpha}\left(\dfrac{\nu+(t^{-1}_{\nu}(\alpha))^{2}}{\nu-1}\right),$$

where $g_{\nu}$ denotes the density of standard normal $t$, see \cite{McNeil(2015)}.

There is a strong connection between $CVaR_{\alpha}(X)$ and $VaR_{\alpha}(X)$, which is evident in the dominance of the $CVaR_{\alpha}(X)$,  see Corollary 7 in \cite{Rockafellar(2002)}. They affirm that the $CVaR_{\alpha}(X)$ dominates $VaR_{\alpha}(X)$: $CVaR_{\alpha}(X)\geq VaR_{\alpha}(X)$. Indeed, $CVaR_{\alpha}(X)>VaR_{\alpha}(X)$ unless there is no chance of a loss greater than $VaR_{\alpha}(X)$.

Another measure closely related to $VaR_{\alpha}(X)$ is the  Economic Capital $(EC_{\alpha}(X))$ defined  as

\begin{defn}
The Economic Capital $EC_{\alpha}(X)$ for a given confidence level $\alpha$ is defined as the Value-at-Risk  $VaR_{\alpha}(X)$ minus the expected loss $E(X)$
\begin{equation}\label{EC}
EC_{\alpha}(X)=VaR_{\alpha}(X)-E(X).
\end{equation}
\end{defn}

Notice that getting the Economic Capital for location-scale distribution is only $\sigma F_{\hat{X}}^{-1}(\alpha).$

\section{Elliptical Beta Distribution}

The clasical two-parameter probability density function of the Beta distribution with shape parameters  $a$ and $b$ is given by
\begin{equation}
\frac{\Gamma\left(a+b\right)}{\Gamma\left(a\right)\Gamma\left(b\right)}
x^{a-1}(1-x)^{b-1},\ \  0\leq x\leq 1,\ \  a>0, \ \ b>0
\end{equation}

The Beta distribution is a useful model for a number of phenomena governed by random variables restricted to finite length domains \cite{Wang(2005)}. It is one of the most used in statistics since it models a wide class of data with different shapes in closed domains. The distribution can be strongly right-skewed or less skewed as the parameters approach each other, also the distributions would be left-skewed if the parameters values were switched, as can see in Figure 1.

\begin{figure}
	\centering
	\includegraphics[width=0.7\linewidth]{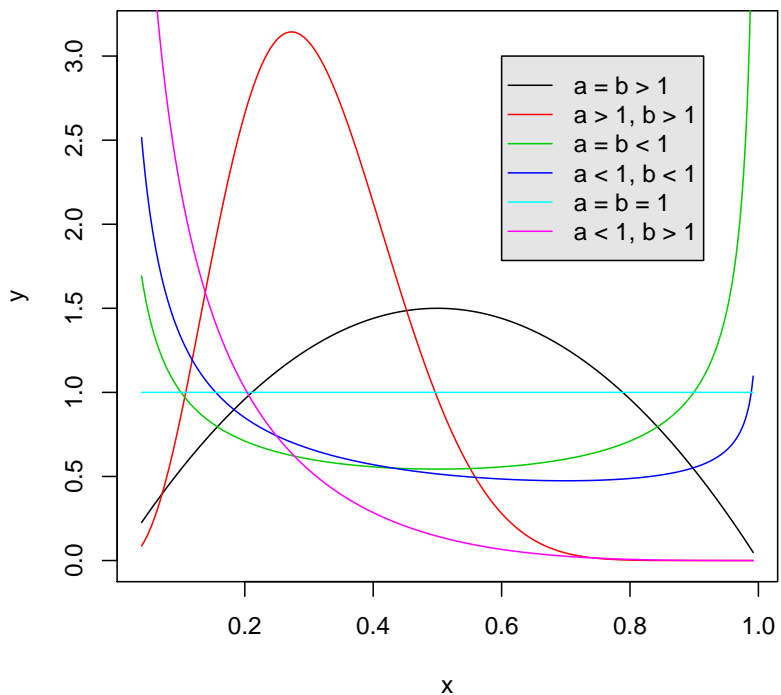}
	\caption{Shapes of Beta distribution}
	\label{fig:formasbetalegen}
\end{figure}

Various generalizations of the Beta distribution have been considered see, for instance, \cite{[Diaz(2008)}, \cite{Wang(2005)} and reference therein, but all these techniques are based on the hypothesis that some variables $A$ and $B$  are independent with Chi-Square distributions in the scalar case or Wishart distribution in the matrix case. In this section, we are generalizing these results, assuming the variables have an Elliptical Wishart distribution.

The generalized Elliptical Wishart distribution allows multiple extensions to robust Elliptical models when the Gaussian assumption is difficult to keep. The addressed distribution was derived by  \cite{Caro-Lopera(2014),Caro-Lopera(2014a)} and it is given as follows.

\begin{defn}
	The variable $X={Z}^{'}{Z}$, is said to have a Elliptical Wishart distribution with $n$ degrees of freedom $EW_{1}(n,\Sigma,h)$, if its p.d.f is given by
	\begin{equation}\label{ellipticalwishart}
	f_{X}({x})=	\frac{\pi^{\frac{n}{2}}}{\Gamma\left(\frac{n}{2}\right)\Gamma\left(\frac{n}{2}\right)
		\Sigma^{\frac{n}{2}}}x^{\frac{n}{2}-1}h_{1}\left(\Sigma^{-1}x\right),
	\end{equation}
	where $Z$ is a variable with Elliptically contoured $E(0, 1, h)$, with p.d.f given in \cite{Gupta(2013)} by
	$f_{Z}(Z)=\Sigma^{\frac{-n}{2}}h(\Sigma^{-1}ZZ^{'})$, where the function $h:\Re \rightarrow\ [0,\infty)$, called the generator function, is such that $\int_{0}^{\infty} u^{pn-1}h(u^2)du<\infty.$
\end{defn}
Now, we developed a result, analogous to those of \cite{Muirhead(2005)}, for introducing the Beta distribution, under elliptical models.

\begin{prop}	\label{1}

	Let $A$ and $B$ be independent univariate ellyptical Wishart distributions, where $A\sim EW_{1}(n_1,\Sigma,h_{1})$ and $B\sim EW_{1}(n_2,\Sigma,h_{2})$ with $\Sigma>0, n_1>0, n_2>0$. Put $A+B=T^2$ where $T>0$ and let $X$ be defined by $A=T^2X$, then  the density function of $X$ is
	\begin{equation}\label{betaelliptical}
	\frac{\pi^{\frac{n_{1}}{2}+\frac{n_{2}}{2}}}{\Gamma\left(\frac{n_{1}}{2}\right)\Gamma\left(\frac{n_{2}}{2}\right)}x^{-\frac{n_{2}}{2}-1}(1-x)^{\frac{n_{2}}{2}-1}		 \sum_{k=0}^{\infty}\frac{h_{2}^{(k)}(0)}{k!x^{k}(1-x)^{-k}}\int_{0}^{\infty}h_{1}(y)y^{\frac{n_{1}}{2}+\frac{n_{2}}{2}+k-1}dy.
	\end{equation}
	with $0\leq x\leq 1$.
	We will said that $X$ has an Elliptical Beta distribution indexed by the generators $h_{1}$ and $h_{2}$ with suitable existence conditions.
	
\end{prop}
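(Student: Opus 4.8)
The plan is to start from the joint law of $(A,B)$, change variables to the ``total'' $W=A+B=T^{2}$ together with the ``proportion'' $X=A/(A+B)$, integrate out the nuisance variable $W$, and then expand the second generator $h_{2}$ in its Maclaurin series so as to decouple the remaining $W$-integral into the form appearing in \eqref{betaelliptical}.

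First I would use independence together with the density \eqref{ellipticalwishart} to write the joint density of $(A,B)$ on $(0,\infty)^{2}$ as a constant times $a^{n_{1}/2-1}b^{n_{2}/2-1}h_{1}(\Sigma^{-1}a)h_{2}(\Sigma^{-1}b)$. The map $(a,b)\mapsto(w,x)$, $w=a+b$, $x=a/(a+b)$, is a bijection of $(0,\infty)^{2}$ onto $(0,\infty)\times(0,1)$, with inverse $a=wx$, $b=w(1-x)$ and Jacobian $|\partial(a,b)/\partial(w,x)|=w$; in particular the support of $X$ is $(0,1)$, as claimed, and the joint density of $(W,X)$ is a constant times
\[
w^{\frac{n_{1}}{2}+\frac{n_{2}}{2}-1}\,x^{\frac{n_{1}}{2}-1}(1-x)^{\frac{n_{2}}{2}-1}\,h_{1}\!\left(\Sigma^{-1}wx\right)h_{2}\!\left(\Sigma^{-1}w(1-x)\right).
\]

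Next I would integrate this over $w\in(0,\infty)$ to obtain the marginal $f_{X}$, substitute the power series $h_{2}(u)=\sum_{k\ge 0}h_{2}^{(k)}(0)u^{k}/k!$, and interchange summation and integration. Pulling the $k$-independent factors out, the $k$-th $w$-integral is $\int_{0}^{\infty}w^{\frac{n_{1}}{2}+\frac{n_{2}}{2}+k-1}h_{1}(\Sigma^{-1}wx)\,dw$, and the substitution $y=\Sigma^{-1}wx$ turns it into $(\Sigma/x)^{\frac{n_{1}}{2}+\frac{n_{2}}{2}+k}\int_{0}^{\infty}y^{\frac{n_{1}}{2}+\frac{n_{2}}{2}+k-1}h_{1}(y)\,dy$. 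The factor $\Sigma^{\frac{n_{1}}{2}+\frac{n_{2}}{2}+k}$ together with the $\Sigma^{-k}$ produced by the expansion of $h_{2}$ cancels the negative powers of $\Sigma$ in the normalising constant; collecting the exponents of $x$ as $\frac{n_{1}}{2}-1-\bigl(\frac{n_{1}}{2}+\frac{n_{2}}{2}+k\bigr)=-\frac{n_{2}}{2}-1-k$ and of $1-x$ as $\frac{n_{2}}{2}-1+k$, and writing $x^{-k}(1-x)^{k}=1/\bigl(x^{k}(1-x)^{-k}\bigr)$, reproduces \eqref{betaelliptical} after absorbing the remaining constants.

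The change of variables and the bookkeeping of the exponents of $\Sigma$, $x$ and $1-x$ are routine; the real content is hidden in the phrase ``suitable existence conditions''. It is needed at two points: $h_{2}$ must admit a convergent Maclaurin expansion, and one must have enough integrability — a dominating function for the termwise integration, built on the hypothesis $\int_{0}^{\infty}u^{n-1}h(u^{2})\,du<\infty$ on the generators together with control of the tail of $h_{2}$ — to license the exchange of sum and integral and to guarantee that the resulting series converges for $x\in(0,1)$. Making those conditions explicit, and thereby delimiting the admissible pairs $(h_{1},h_{2})$, is the step I would expect to demand the most care.
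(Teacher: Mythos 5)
Your proposal is correct and follows essentially the same route as the paper: the change of variables $(a,b)\mapsto(w,x)$ with Jacobian $w$, the Maclaurin expansion of $h_{2}$, and termwise integration over $w$. The only cosmetic difference is that the paper evaluates the $w$-integral by citing a general matrix-variate integral formula of Caro-Lopera et al.\ (specialised to $m=1$), which is exactly the scalar substitution $y=\Sigma^{-1}wx$ you perform directly; your explicit remarks on the interchange of sum and integral address what the paper leaves implicit under ``suitable existence conditions.''
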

\begin{proof}
	For the proof of the proposition see Appendix A.1
\end{proof}

The generalized Beta will be denoted by given $EB(n_{1},n_{2};h_{1},h_{2})$.

\begin{rem} The well known univariate Beta distribution is derived as follows: Take the gaussian generators 	 $h_{1}(y)=\frac{1}{(2\pi)^{\frac{n_{1}}{2}}}e^{-\frac{y}{2}}$,  $h_{2}(y)=\frac{1}{(2\pi)^{\frac{n_{2}}{2}}}e^{-\frac{y}{2}}$ and  $h_{2}^{(k)}(0)=\frac{(-1)^{k}}{(2\pi)^{\frac{n_{2}}{2}}2^{k}}$. Then, 	
	$$\int_{0}^{\infty}h_{1}(y)y^{\frac{n_{1}}{2}+\frac{n_{2}}{2}+k-1}dy=
	\frac{2^{\frac{n_{2}}{2}+k}}{\pi^{\frac{n_{1}}{2}}}\Gamma\left(\frac{n_{1}}{2}+\frac{n_{2}}{2}+k\right)$$
	For the summation, use $\Gamma(z+k)=\Gamma(z)(z)_{k}$.
	$$\sum_{k=0}^{\infty}\frac{(-1)^{k}(1-x)^{k}}{k!x^{k}}\Gamma\left(\frac{n_{1}}{2}+\frac{n_{2}}{2}+k\right)= 	 \Gamma\left(\frac{n_{1}}{2}+\frac{n_{2}}{2}\right)\sum_{k=0}^{\infty}\frac{\left(\frac{n_{1}}{2}+\frac{n_{2}}{2}\right)_{k}
	}{k!}\left(1-\frac{1}{x}\right)^{k}.$$
	Now, by zonal polynomial theory, $|\mathbf{I}_{m}-\mathbf{X}|^{-a}=\sum_{k=0}^{\infty}\sum_{\kappa}\frac{(a)_{\kappa}}{k!}C_{\kappa}(\mathbf{X})$, $||\mathbf{X}||<1$, see for example \cite{Muirhead(2005)}. Applying this for $m=1$, the summation just becomes $\left(\frac{1}{x}\right)^{-\left(\frac{n_{1}}{2}+\frac{n_{2}}{2}\right)}$. 
	
	Then the distribution $\frac{\Gamma\left(\frac{n_{1}}{2}+\frac{n_{2}}{2}\right)}{\Gamma\left(\frac{n_{1}}{2}\right)\Gamma\left(\frac{n_{2}}{2}\right)}x^{\frac{n_{1}}{2}-1}(1-x)^{\frac{n_{2}}{2}-1}$ is obtained.
\end{rem}
A number of generalized Beta distributions can be derived by using the following classical generators of elliptical distributions. As you can see, these generalizations are closely related to the hypergeometric Gauss function and the generalized hypergeometric Gauss function are defined in \cite{Baterman(1953)} and \cite{Caro2010} respectively.

\begin{defn} The Gauss hypergeometric function   $_2F_{1}(m,n;p;x)$ is defined as
\begin{equation}\label{hyper}
_2F_{1}(m,n;p;x)=\sum_{k=0}^{\infty}\dfrac{(m)_{k} (n)_{k} }{(p)_{k}k!}x^{k}.
\end{equation}
\end{defn}

Both the variable and the parameters of this equation can be complex, but in this work, we will assume, unless otherwise indicated, that both $x$ and $m,n$ and $p$ are real and $-p\notin N_0=\{0,1,2,...\}$,    and $ (m)_{k} = ( \Gamma(m+k))/( \Gamma(m))$ is Pochhammer\'s symbol, \cite{Baterman(1953)}.

This series converges when $|x|<1$ and when x = 1 provided that $Re(p - m - n) > 0$, and when $x = -1$ provided that $Re(p-m-n+1) > 0$.

\begin{defn} \label{ghyper} The generalized Gauss hypergeometric function   $_2P_{1}(f(k):m,n;p;x)$ is defined as
\begin{equation}
_2P_{1}(f(k):m,n;p;x)=\sum_{k=0}^{\infty}\dfrac{ (m)_{k} (n)_{k} }{(p)_{k}k!}f(k)x^{k}.
\end{equation}
\end{defn}

\subsection{Beta-Pearson model}

An interesting generalized Beta distribution can be derived from the Proposition  \ref{1} in terms of the Pearson VII model, in this case,

$$h(y)=\frac{\Gamma(s)}{(\pi R)^{\frac{n}{2}}\Gamma\left(s-\frac{n}{2}\right)}\left(1+\frac{y}{R}\right)^{-s}$$ and
$$h^{(k)}(0)=\frac{\Gamma(s)(-1)^{k}(s)_{k}}{R^{k}(\pi R)^{\frac{n}{2}}\Gamma\left(s-\frac{n}{2}\right)}.$$
Then $\int_{0}^{\infty}h_{1}(y)y^{\frac{n_{1}}{2}+\frac{n_{2}}{2}+k-1}dy=
\frac{R_{1}^{\frac{n_{2}}{2}+k}\Gamma\left(\frac{n_{1}}{2}+\frac{n_{2}}{2}+k\right)}{\pi^{\frac{n_{1}}{2}}}
\frac{\Gamma\left(s_{1}-\frac{n_{1}}{2}-\frac{n_{2}}{2}-k\right)}{\Gamma\left(s_{1}-\frac{n_{1}}{2}\right)}$ and the distribution  can be written as:

\begin{eqnarray*}
	&&
	\frac{\Gamma(s_{2})x^{-\frac{n_{2}}{2}-1}(1-x)^{\frac{n_{2}}{2}-1}}{\Gamma\left(\frac{n_{1}}{2}\right)\Gamma\left(\frac{n_{2}}{2}\right)
		\Gamma\left(s_{2}-\frac{n_{2}}{2}\right)\Gamma\left(s_{1}-\frac{n_{1}}{2}\right)}
	\\&&\times\sum_{k=0}^{\infty}
	\frac{(s_{2})_{k}\Gamma\left(\frac{n_{1}}{2}+\frac{n_{2}}{2}+k\right)\Gamma\left(s_{1}-\frac{n_{1}}{2}-\frac{n_{2}}{2}-k\right)}{
		(-1)^{k}
		k!x^{k}(1-x)^{-k}\left(\frac{R_{2}}{R_{1}}\right)^{\frac{n_{2}}{2}+k}}
\end{eqnarray*}
Now, using $\Gamma(z+k)=(z)_{k}\Gamma(z)$ and $\Gamma(z-k)=\frac{\Gamma(z)\Gamma(-z+1)}{(-1)^{k}\Gamma(-z+k+1)}$, we have:
\begin{eqnarray*}
	&&
	\frac{\Gamma\left(\frac{n_{1}}{2}+\frac{n_{2}}{2}\right)\Gamma(s_{2})\Gamma\left(s_{1}-\frac{n_{1}}{2}-\frac{n_{2}}{2}\right)
		x^{-\frac{n_{2}}{2}-1}(1-x)^{\frac{n_{2}}{2}-1}}
	{\Gamma\left(\frac{n_{1}}{2}\right)\Gamma\left(\frac{n_{2}}{2}\right)
		\Gamma\left(s_{1}-\frac{n_{1}}{2}\right)\Gamma\left(s_{2}-\frac{n_{2}}{2}\right)
	}
	\\&&\times\sum_{k=0}^{\infty}
	\frac{(s_{2})_{k}\left(\frac{n_{1}}{2}+\frac{n_{2}}{2}\right)_{k}}{
		k!x^{k}(1-x)^{-k}\left(\frac{R_{2}}{R_{1}}\right)^{\frac{n_{2}}{2}+k}\left(-s_{1}+\frac{n_{1}}{2}+\frac{n_{2}}{2}+1\right)_{k}}
\end{eqnarray*}

And using the generalized hypergeometric function (\ref{hyper}), we have:
\begin{eqnarray*}
	&&
	\frac{\Gamma\left(\frac{n_{1}}{2}+\frac{n_{2}}{2}\right)\Gamma(s_{2})\Gamma\left(s_{1}-\frac{n_{1}}{2}-\frac{n_{2}}{2}\right)
		x^{-\frac{n_{2}}{2}-1}(1-x)^{\frac{n_{2}}{2}-1}}
	{\Gamma\left(\frac{n_{1}}{2}\right)\Gamma\left(\frac{n_{2}}{2}\right)
		\Gamma\left(s_{1}-\frac{n_{1}}{2}\right)\Gamma\left(s_{2}-\frac{n_{2}}{2}\right)
	}
	\\\times &&{}_{2}P_{1}\left(\left(\frac{R_{1}}{R_{2}}\right)^{\frac{n_{2}}{2}+k}:s_{2},\frac{n_{1}}{2}+\frac{n_{2}}{2};
	-s_{1}+\frac{n_{1}}{2}+\frac{n_{2}}{2}+1;
	x^{-1}(1-x)\right)
\end{eqnarray*}

When $R_{1}=R_{2}$, the Beta-Pearson VII simplifies to:

\begin{eqnarray*}
	&&
	\frac{\Gamma\left(\frac{n_{1}}{2}+\frac{n_{2}}{2}\right)\Gamma(s_{2})\Gamma\left(s_{1}-\frac{n_{1}}{2}-\frac{n_{2}}{2}\right)
		x^{-\frac{n_{2}}{2}-1}(1-x)^{\frac{n_{2}}{2}-1}}
	{\Gamma\left(\frac{n_{1}}{2}\right)\Gamma\left(\frac{n_{2}}{2}\right)
		\Gamma\left(s_{1}-\frac{n_{1}}{2}\right)\Gamma\left(s_{2}-\frac{n_{2}}{2}\right)
	}
	\\\times &&{}_{2}F_{1}\left(s_{2},\frac{n_{1}}{2}+\frac{n_{2}}{2};
	-s_{1}+\frac{n_{1}}{2}+\frac{n_{2}}{2}+1;
	x^{-1}(1-x)\right)
\end{eqnarray*}

If $a=s_{2}, b=\frac{n_{1}}{2}+\frac{n_{2}}{2},
c=-s_{1}+\frac{n_{1}}{2}+\frac{n_{2}}{2}+1$, the Kummer relations of Erdelyi, p. 105, can provide a polynomial when $c-a$ or $c-b$ is a negative integer.

\subsection{Beta-Kotz model}

Note that, among any other restrictions, the Proposition \ref{1} can be applied if $h_{2}^{(k)}(0)$ exists and it is different from zero.  If we consider two Kotz type I models,
\begin{equation}
h(y)=\frac{r^{t_{1}+\frac{n_{2}}{2}-1}}{\pi^{\frac{n_{1}}{2}}\Gamma\left(t_{1}+\frac{n_{2}}{2}-1\right)}y^{t_{1}-1}e^{-ry}
\end{equation}

Then $h_{1}$ and $h_{2}$, depending on parameters $t_{i}\neq 1, s_{i}=1, r_{i}=R, i=1,2$, then  $h_{2}^{(k)}(0)=0$ and the distribution of the associated Beta must be derived by another method.  To get to this end, we just use the Taylor expansion in the joint density function of $T^2$ and $X$, so we have

\begin{equation*}
\frac{\pi^{\frac{n_{1}}{2}+\frac{n_{2}}{2}}x^{\frac{n_{1}}{2}-1}(1-x)^{\frac{n_{2}}{2}-1}(t^2)^{\frac{n_{1}}{2}+\frac{n_{2}}{2}-1}}{\Gamma\left(\frac{n_{1}}{2}\right)\Gamma\left(\frac{n_{2}}{2}\right)
	\Sigma^{\frac{n_{1}}{2}+\frac{n_{2}}{2}}}
h_{1}\left(\Sigma^{-1}t^{2}x\right)h_{2}\left(\Sigma^{-1}t^{2}(1-x)\right)dt^2dx.
\end{equation*}

Replacing the  generator of the Kotz function  $h_{i}(y)=\frac{R^{t_{i}+\frac{n_{i}}{2}-1}\Gamma\left(\frac{n_{i}}{2}\right)}{\pi^{\frac{n_{i}}{2}}
	\Gamma\left(t_{i}+\frac{n_{i}}{2}-1\right)}y^{t_{i}-1}e^{-Ry}$, $i=1,2$, we obtain
$$\dfrac{r^{t_1+\frac{n_1}{2}-1}r^{t_2+\frac{n_2}{2}-1}}{\Gamma(t_1+t_2+\frac{n_1}{2}+\frac{n_2}{2}-2)\Sigma^{(t_1+t_2+\frac{n_1}{2}+\frac{n_2}{2}-2)}}t^{2(t_1+t_2+\frac{n_1}{2}+\frac{n_2}{2}-3)}e^{-(r_1+r_2)\Sigma^{-1}t^2}\times $$
$$\dfrac{\Gamma(t_1+t_2+\frac{n_1}{2}+\frac{n_2}{2}-2)}{\Gamma(t_1+\frac{n_1}{2}-1)\Gamma(t_2+\frac{n_2}{2}-1)}x^{(t_1+\frac{n_1}{2}-1)-1}(1-x)^{(t_2+\frac{n_2}{2}-1)-1}dx dt^2$$

Then $T^2=A+B$ is independent of $X$, and the distribution of $X$ is 
\begin{equation}\label{U}
\dfrac{\Gamma(t_1+t_2+\frac{n_1}{2}+\frac{n_2}{2}-2)}{\Gamma(t_1+\frac{n_1}{2}-1)\Gamma(t_2+\frac{n_2}{2}-1)}x^{(t_1+\frac{n_1}{2}-1)-1}(1-x)^{(t_2+\frac{n_2}{2}-1)-1}
\end{equation}

Note that if $r_1=r_2=R$ then $T^2\sim EW_1(n_1+n_2, \Gamma,h(t_1+t_2-1,R,s=1)).$\\

Let us denote the Beta-Kotz distribution $KBeta(t_1+\frac{n_1}{2}-1, t_2+\frac{n_2}{2}-1)$ as the distribution of the random variable $X$ previously defined. Note also that if $t_1=t_2=1$, then $X\sim Beta(\frac{n_1}{2},\frac{n_2}{2})$.

\begin{rem}
	We check a corollary of the above general Beta-Kotz distribution. We can mix for example a Gaussian class model $h_{2}(y)=\frac{r^{\frac{n_{2}}{2}}}{\pi^{\frac{n_{2}}{2}}}e^{-ry}$ with exponential scale $r$  instead of $\frac{1}{2}$ and non vanishing $h_{2}^{(k)}(0)=\frac{(-1)^{k}r^{\frac{n_{2}}{2}+k}}{\pi^{\frac{n_{2}}{2}}}$,  with  a Kotz type I model, $h_{1}(y)=\frac{r^{t_{1}+\frac{n_{2}}{2}-1}}{\pi^{\frac{n_{1}}{2}}\Gamma\left(t_{1}+\frac{n_{2}}{2}-1\right)}y^{t_{1}-1}e^{-ry}$.
	After simplification the summation in (\ref{betaelliptical}) goes to\\ $\frac{\Gamma\left(\frac{n_{1}}{2}\right)\pi^{-\frac{n_{1}}{2}}-\frac{n_{1}}{2}}
	{\Gamma\left(t_{1}+\frac{n_{1}}{2}-1\right)}x^{-1+\frac{n_{1}}{2}+\frac{n_{2}}{2}+t_{1}}\Gamma\left(t_{1}+\frac{n_{1}}{2}+\frac{n_{2}}{2}-1\right)$, and then the associated Beta-Kotz is obtained as follows
	\begin{equation}
	\frac{\Gamma\left(t_{1}+\frac{n_{1}}{2}+\frac{n_{2}}{2}-1\right)}{\Gamma\left(\frac{n_{2}}{2}\right)\Gamma\left(t_{1}+\frac{n_{1}}{2}-1\right)}
	x^{t_{1}+\frac{n_{1}}{2}-2}(1-x)^{\frac{n_{2}}{2}-1}
	\end{equation}
	This coincides with (\ref{U}) when $t_{2}=1$, as we expect.
\end{rem}

\section{Value-at-Risk using the Beta-Kotz Distribution}\label{Valuerisk}

In this section we are studying Value-at-Risk, when the data come from the family of Beta-Kotz distributions introduced early.

\begin{thm} \label{t2}
Let	$X\sim KBeta(n_1,n_2,t_1,t_2)$ with $n_1>0, n_2>0$, $t_1+\frac{n_1}{2}-1>0$ and $t_2+\frac{n_2}{2}-1>0$, and, $F_X$ denote the cumulative distribution function. The Beta-Kotz Value-at-Risk of $X$ at probability level $\alpha\in(0,1)$ is obtained solving the hypergeometric equation:
\begin{equation}\label{w}
C\frac{VaR\beta_{\alpha}(X)^{t_1+\frac{n_1}{2}-1}}{(t_1+\frac{n_1}{2}-1)}{}_2F_{1}\left(t_1+\frac{n_1}{2}-1,-t_2-\frac{n_2}{2}+2;  t_1+\frac{n_1}{2},VaR\beta_{\alpha}(X)\right)-\alpha=0,
\end{equation}
where $C=\frac{\Gamma\left(t_1+t_2+\frac{n_1}{2}+\frac{n_2}{2}-2\right)}{\Gamma\left(t_1+\frac{n_1}{2}-1\right)\Gamma\left(t_2+\frac{n_2}{2}-1\right)}.$
\end{thm}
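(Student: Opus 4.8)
The plan is to derive equation (\ref{w}) by computing the cumulative distribution function $F_X$ of the Beta--Kotz law directly from its density (\ref{U}) and then imposing Definition~\ref{VaR}. Abbreviate $a:=t_1+\frac{n_1}{2}-1$ and $b:=t_2+\frac{n_2}{2}-1$, which are positive by hypothesis; then (\ref{U}) says that $X$ has density $f_X(x)=C\,x^{a-1}(1-x)^{b-1}$ on $(0,1)$, with $C=\Gamma(a+b)/(\Gamma(a)\Gamma(b))$ as in the statement (note $t_1+t_2+\frac{n_1}{2}+\frac{n_2}{2}-2=a+b$), and $\int_0^1 f_X=1$ because $C$ is the reciprocal Beta constant. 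Hence, for $x\in(0,1)$,
\[
F_X(x)=C\int_0^x u^{a-1}(1-u)^{b-1}\,du .
\]

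The next step is to expand the factor $(1-u)^{b-1}$ in the generalized binomial (Maclaurin) series $(1-u)^{b-1}=\sum_{k\ge0}\frac{(1-b)_k}{k!}u^k$, valid for $|u|<1$, and integrate term by term on $[0,x]$ with $x<1$; the interchange of $\sum$ and $\int$ is legitimate since the series converges uniformly on compact subsets of $(-1,1)$. This gives
\[
F_X(x)=C\sum_{k\ge0}\frac{(1-b)_k}{k!}\,\frac{x^{a+k}}{a+k}
      =C\,\frac{x^{a}}{a}\sum_{k\ge0}\frac{(a)_k\,(1-b)_k}{(a+1)_k\,k!}\,x^{k},
\]
where I use the identity $\frac{1}{a+k}=\frac{1}{a}\cdot\frac{(a)_k}{(a+1)_k}$. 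By Definition~\ref{hyper} the last series is $ {}_2F_1(a,1-b;a+1;x)$, so $F_X(x)=C\frac{x^a}{a}\,{}_2F_1(a,1-b;a+1;x)$. Re-substituting $a=t_1+\frac{n_1}{2}-1$, $1-b=-t_2-\frac{n_2}{2}+2$ and $a+1=t_1+\frac{n_1}{2}$ shows that the left-hand side of (\ref{w}) equals $F_X(VaR\beta_{\alpha}(X))-\alpha$, so by Definition~\ref{VaR} the Beta--Kotz Value-at-Risk is precisely a root of (\ref{w}).

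It remains to argue that this characterization is well posed. Since $f_X>0$ on $(0,1)$, $F_X$ is continuous and strictly increasing there with $F_X(0^+)=0$ and $F_X(1^-)=1$; consequently $F_X(x)=\alpha$ has a unique solution $VaR\beta_\alpha(X)\in(0,1)$, which lies inside the disc $|x|<1$ of convergence of the ${}_2F_1$ series in (\ref{w}) (and when $1-b$ is a non-positive integer that series terminates, so ${}_2F_1$ is a polynomial, in line with the Kummer/Erdelyi remark). As a consistency check one verifies $F_X(1^-)=1$ through the Gauss summation formula ${}_2F_1(a,1-b;a+1;1)=\Gamma(a+1)\Gamma(b)/(\Gamma(1)\Gamma(a+b))$, whose convergence hypothesis $c-a-b=b>0$ is exactly $t_2+\frac{n_2}{2}-1>0$, together with $\Gamma(a+1)=a\Gamma(a)$ and the value of $C$. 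The main obstacle in turning this sketch into a complete argument is the bookkeeping needed to keep every identity valid across the whole admissible parameter range --- in particular justifying the term-by-term integration when $b-1<0$ (a genuinely infinite binomial series) and handling the Pochhammer manipulations when $a$ or $b$ is non-integral --- rather than any conceptual difficulty.
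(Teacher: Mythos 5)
Your proposal is correct and follows essentially the same route as the paper's proof in Appendix A.2: both reduce $F_X(VaR\beta_{\alpha}(X))=\alpha$ to the incomplete Beta integral and then pass to the ${}_2F_1$ form. The only difference is that the paper simply cites ``the incomplete Beta function's definition'' for the identity $\int_0^x u^{a-1}(1-u)^{b-1}\,du=\frac{x^a}{a}\,{}_2F_1(a,1-b;a+1;x)$, whereas you derive it explicitly by term-by-term integration of the binomial series (and your uniform-convergence remark on $[0,x]\subset(-1,1)$ already disposes of the worry you raise at the end).
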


\begin{proof}
	For the proof of theorem see Apendix A.2.
\end{proof}
From now on, to simplify the notation,  let $a=t_1+n_1/2-1$ and $b=t_2+n_2/2-1$, then the equation (\ref{w}) is equivalent to

\begin{equation}\label{F1}
\dfrac{\Gamma(a+b)}{\Gamma(a)\Gamma(b)}\frac{VaR\beta_{\alpha}(X)^{a}}{a}{}_2F_{1}\left(a,1-b;a+1;VaR\beta_{\alpha}(X)\right)-\alpha=0
\end{equation}

Then the Beta-Kotz Value-at-Risk of $X$ at probability level $\alpha\in(0,1)$ is obtained solving the hypergeometric equation:
$$	\dfrac{\Gamma(a+b)}{\Gamma(a)\Gamma(b)}\sum_{k=0}^{\infty}\dfrac{(1-b)_{k}}{(a+k)k!} VaR\beta_{\alpha}(X))^{a+k}-\alpha=0$$

\begin{thm}\label{t3}
	The Beta-Kotz Value-at-Risk  is unique in the interval $(0,1)$.
\end{thm}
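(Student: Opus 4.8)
The plan is to show that the map $x \mapsto F_X(x)$ is strictly increasing on $(0,1)$, so that the equation $F_X(x) = \alpha$ has at most one solution there, and then to invoke continuity together with the boundary values $F_X(0^+) = 0$ and $F_X(1^-) = 1$ to conclude existence. Writing $a = t_1 + n_1/2 - 1 > 0$ and $b = t_2 + n_2/2 - 1 > 0$ as in the excerpt, the random variable $X$ has density $C\, x^{a-1}(1-x)^{b-1}$ on $(0,1)$ with $C = \Gamma(a+b)/(\Gamma(a)\Gamma(b)) > 0$, i.e.\ $X \sim \mathrm{Beta}(a,b)$ in the classical sense. Since this density is strictly positive on the open interval $(0,1)$, the cumulative distribution function $F_X$ is continuous and strictly increasing there.

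First I would record that the left-hand side of the defining equation~(\ref{F1}), call it
\[
G(x) = \frac{\Gamma(a+b)}{\Gamma(a)\Gamma(b)}\,\frac{x^{a}}{a}\,{}_2F_{1}(a,1-b;a+1;x) - \alpha,
\]
equals $F_X(x) - \alpha$ for $x \in [0,1]$; this identity is exactly what Theorem~\ref{t2} (proved in Appendix~A.2) establishes, since the series expansion $\sum_{k\ge 0} (1-b)_k x^{a+k}/((a+k)k!)$ is the termwise integral of $x^{a-1}(1-x)^{b-1}$. Next I would observe $G(0) = -\alpha < 0$ and $G(1) = 1 - \alpha > 0$ (the value at $x=1$ uses the Gauss summation ${}_2F_1(a,1-b;a+1;1) = \Gamma(a+1)\Gamma(b)/(\Gamma(a+b))$, valid because the parameter difference $a+1-a-(1-b) = b > 0$), so by the intermediate value theorem a root exists in $(0,1)$. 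Finally, $G'(x) = F_X'(x) = C\,x^{a-1}(1-x)^{b-1} > 0$ for every $x \in (0,1)$, so $G$ is strictly increasing and the root is unique.

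The only genuinely delicate point is making sure the hypergeometric series representation is legitimately identified with the distribution function on the whole half-open interval: the series ${}_2F_1(a,1-b;a+1;x)$ converges for $|x| < 1$, and one needs its behaviour at the endpoint $x = 1$. This is handled by noting that when $b > 0$ the convergence condition $\mathrm{Re}(c - a - \beta) = b > 0$ (with $c = a+1$, $\beta = 1-b$) from the excerpt's discussion of Definition~\ref{hyper} guarantees convergence at $x = 1$, and Abel's theorem gives continuity of $G$ up to $x = 1$; alternatively, one bypasses the series entirely and argues directly with the Beta density, which is the cleaner route. I would present the density-based argument as the main line and relegate the hypergeometric endpoint bookkeeping to a remark, since the substantive content — strict positivity of the Beta density on $(0,1)$ forcing strict monotonicity of $F_X$ — is elementary once Theorem~\ref{t2} is in hand.
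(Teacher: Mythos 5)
Your proposal is correct and follows essentially the same route as the paper's Appendix A.3: both identify the derivative of the hypergeometric expression with the Beta density $C\,x^{a-1}(1-x)^{b-1}$, which is strictly positive on $(0,1)$, use the intermediate value theorem with the sign change between $x=0$ and $x=1$ for existence, and derive uniqueness from the nonvanishing derivative (the paper phrases this via Rolle's theorem, you via strict monotonicity --- the same argument). Your added care about convergence of the series at the endpoint $x=1$ is a point the paper glosses over, but it is a refinement rather than a different approach.
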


\begin{proof}
	For the proof of theorem see Apendix A.3.
\end{proof}

The Value-at-Risk for a Beta-Kotz distribution does not always have a closed expression which makes it practically impossible to obtain both the Conditional Value-at-Risk and the Economic capital defined by the equations \ref{CVaR} and \ref{EC}, however, as we will see in the next section depending on the values that take the parameters $a$ and $b$ it is possible to find exact analytical solutions, this also allows you to find exact solutions for the Value-at-Risk, the Conditional Value-at-Risk and for the  Economic capital.

\section{Analytical Estimation of Risk Measures}
In this section, we will be addressing certain restrictions on the values of parameters $a$ and $b$  that enable us to analyze the problem, that is, values for which it is possible to obtain families of variable changes that provide analytical results on the zeros of the Gauss hypergeometric functions.

If $m=-q$ and/or $n=-q$ is a negative integer, the series terminates and reduces to a polynomial of degre $n\in Z$ called hypergeometric polynomial of gradee $q$, see \cite{Baterman(1953)}. For instance, if $n=-q$, the hypergeometric function is reduced to the next polynomial
$$_2F_{1}(m,n;p;x)=\sum_{k=0}^{q}\dfrac{(m)_{k} (-q)_{k} }{(p)_{k}k!}x^{k}.$$

Then, if $1-b$ is a negative integer, the series, (\ref{w}), is reduced to a polynomial, and use the fact that $(-m)_{k}=(-1)^{k}\frac{\Gamma(m+1)}{\Gamma(m+1-k)}$, then by (\ref{F1}) to find the $VaR\beta_{\alpha}(X)$ is reduced to solve the following polynomial equation.

\begin{equation}\label{m}
\dfrac{\Gamma(a+b)}{\Gamma(a)}\sum_{k=0}^{b-1}\dfrac{(-1)^{k}}{k!(a+k)\Gamma(b-k)} (VaR\beta_{\alpha}(X))^{a+k}-\alpha=0.
\end{equation}

In the next propositions, we will deal with the analytical calculation of the $VaR\beta_{\alpha}(X)$ seen as real zeros of the hypergeometric polynomials (\ref{m}).

\begin{prop} \label{p1}
	\begin{enumerate}
		\item  The Beta-Kotz Value-at-Risk $VaR\beta_{\alpha}(X)$, for $b=2$, is determined by:
		\begin{itemize}
			\item If $a=b-1$ then
			\begin{equation}\label{V3}
			VaR\beta_{\alpha}(X)=1 -\sqrt{1-\alpha}
			\end{equation}
			
			\item  	If $a=b$ then
			\begin{equation}\label{V4}
			VaR\beta_{\alpha}(X)= \frac{1}{2}-\frac{1}{2}(-1+2\alpha+2\sqrt{\alpha^2-\alpha})^{\frac{1}{3}}-\frac{1}{2}(-1+2\alpha+2\sqrt{\alpha^2-\alpha})^{-\frac{1}{3}}
			\end{equation}	
			
			\item If $a=b+1$ then
			\begin{equation}\label{V5}
			VaR\beta_{\alpha}(X)=\frac{1}{6}\left[2+\sqrt{4+\frac{6(\alpha+Q^{2})}{R}}- \sqrt{2}\sqrt{4-\frac{3\alpha}{Q}}\right]+\frac{1}{6}\left[-3Q+\frac{4\sqrt{2}}{\sqrt{2+\frac{3(\alpha+R^{2})}{R}}}\right]
			\end{equation}
			
		\end{itemize}	
		
		\item The Beta-Kotz Value-at-Risk $VaR\beta_{\alpha}(X)$, for $b=3$, is determinted by:
		\begin{itemize}
			\item if $a=b-2$ then
			\begin{equation}\label{V6}
			VaR\beta_{\alpha}(X)=1+\sqrt[3]{\alpha-1}
			\end{equation}
			
			\item 	if $a=b-1$, the $VaR\beta_{\alpha}(X)$ is giben by
			
			{\footnotesize	\begin{equation}\label{V7} \frac{2}{3}+\frac{1}{2}\sqrt{\frac{8}{9}+\frac{16}{27\sqrt{\frac{4}{9}+\frac{2}{3}SP+\frac{2(1-\alpha)}{3SP}}}-\frac{2}{3}SP-\frac{2(1-\alpha)}{3SP}}-\frac{1}{2}\sqrt{\frac{4}{9}+\frac{2}{3}SP+\frac{2(1-\alpha)}{3SP}}
				\end{equation}}
		\end{itemize}
		
		\item The Beta-Kotz Value-at-Risk $VaR\beta_{\alpha}(X)$ for $b=4$ and $a=b-3$, is given by:
		\begin{equation}\label{V2}
		VaR\beta_{\alpha}(X)=1 -\sqrt[4]{1-\alpha}
		\end{equation}

		\item The Beta-Kotz Value-at-Risk $VaR\beta_{\alpha}(X)$ for $b=1$ and $a=b+n$,whit $n\in \textbf{N}_0$, is given by:
		
		\begin{equation}\label{V1}
		VaR\beta_{\alpha}(X)=\sqrt[n+1]{\alpha}.
		\end{equation}
		
	\end{enumerate}
\end{prop}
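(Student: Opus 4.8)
The plan is to turn, case by case, the defining hypergeometric equation of Theorem~\ref{t2} into an explicit polynomial equation in $v:=VaR\beta_{\alpha}(X)$, to solve that polynomial by radicals, and then to single out the root lying in $(0,1)$, whose uniqueness is guaranteed by Theorem~\ref{t3}. The starting point is equation~(\ref{F1}): when $b$ is a positive integer the Pochhammer factor $(1-b)_k$ vanishes for $k\ge b$, so $_2F_1(a,1-b;a+1;v)$ is a polynomial and the VaR equation reduces to~(\ref{m}),
\[
\frac{\Gamma(a+b)}{\Gamma(a)}\sum_{k=0}^{b-1}\frac{(-1)^k}{k!\,(a+k)\,\Gamma(b-k)}\,v^{a+k}-\alpha=0,
\]
after using $(-m)_k=(-1)^k\Gamma(m+1)/\Gamma(m+1-k)$. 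For $b=1$ the sum collapses to its $k=0$ term and one reads off $v^{a}-\alpha=0$, i.e.\ $v=\alpha^{1/a}$; with $a=b+n=1+n$ this is exactly~(\ref{V1}). For each remaining listed pair $(a,b)$ I would substitute the small-integer values of $a$ and of the Gamma factors and clear denominators, obtaining a monic polynomial $p_{a,b}(v)=0$ of degree $a+b-1$, with degrees $2,3,4$ occurring.

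The ``diagonal'' cases $a=1$ — namely $b=2$ for~(\ref{V3}), $b=3$ for~(\ref{V6}), $b=4$ for~(\ref{V2}) — are handled uniformly: here $\Gamma(1+b)/(k!\,(1+k)\,\Gamma(b-k))=\binom{b}{k+1}/b!$, so the prefactor cancels the $b!$ and the binomial theorem collapses the sum to $1-(1-v)^b$, turning~(\ref{m}) into $(1-v)^b=1-\alpha$. Extracting the real $b$-th root that falls in $(0,1)$ gives $v=1-\sqrt[b]{1-\alpha}$ for $b$ even and $v=1+\sqrt[b]{\alpha-1}=1-\sqrt[b]{1-\alpha}$ for $b$ odd, which are precisely formulas~(\ref{V2}),~(\ref{V3}),~(\ref{V6}).

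The four remaining cases are genuine cubics and quartics. For $b=2,\,a=2$ one gets $2v^3-3v^2+\alpha=0$; the shift $v=y+\tfrac12$ depresses it to $y^3-\tfrac34 y+\tfrac{2\alpha-1}{4}=0$, which has three real roots for $\alpha\in(0,1)$, so the substitution $y=-\tfrac12(s+s^{-1})$ converts it into $s^3+s^{-3}=4\alpha-2$, a quadratic in $s^3$ with root $s^3=-1+2\alpha+2\sqrt{\alpha^2-\alpha}$; back-substituting and undoing the shift yields exactly~(\ref{V4}). For $b=2,\,a=3$ (polynomial $3v^4-4v^3+\alpha=0$) and for $b=3,\,a=2$ (polynomial $3v^4-8v^3+6v^2-\alpha=0$) I would apply Ferrari's method: normalize, depress by the shift $v=y+\tfrac13$ respectively $v=y+\tfrac23$ (the source of the $\tfrac16[2+\cdots]$ and the $\tfrac23$ appearing in~(\ref{V5}) and~(\ref{V7})), solve the resolvent cubic — which introduces the auxiliary quantities $Q,R$ respectively $S,P$ — and then extract the two nested square roots.

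Finally, root selection: by Theorem~\ref{t2} the VaR solves $p_{a,b}(v)=0$, and the left side of~(\ref{m}) is $F_X(v)-\alpha$ on $[0,1]$, a continuous strictly increasing function running from $-\alpha<0$ at $v=0$ to $1-\alpha>0$ at $v=1$; hence there is exactly one root in $(0,1)$, in accordance with Theorem~\ref{t3}. It remains only to check that the particular closed-form branch written in each formula is real and lies in $(0,1)$ for every $\alpha\in(0,1)$, after which uniqueness forces the displayed expression to be the VaR. I expect the main obstacle to be entirely computational: carrying out Ferrari's method for the two quartics and coaxing the resulting radical expressions into precisely the shapes~(\ref{V5}) and~(\ref{V7}), together with the sign and branch bookkeeping needed to confirm that those particular combinations of nested radicals — rather than one of the other three roots — are the value lying in $(0,1)$.
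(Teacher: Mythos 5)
Your proposal follows the same route as the paper's own proof: specialize equation (\ref{m}) to each integer pair $(a,b)$, obtain a polynomial of degree $a+b-1$, solve it by radicals, and use the uniqueness from Theorem \ref{t3} to select the root in $(0,1)$; your Cardano computation for $a=b=2$ reproduces (\ref{V4}) exactly, and your binomial-theorem collapse of the $a=1$ cases to $(1-v)^{b}=1-\alpha$ is just a tidier packaging of the same quadratic, cubic and quartic polynomials the paper writes out one by one. Like the paper, you leave the two genuine quartic cases (\ref{V5}) and (\ref{V7}) at the level of ``apply Ferrari's method and check the branch,'' so neither argument actually verifies those particular nested-radical expressions.
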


In particular note that when $a=b=1.$ the function $ Beta(a,b)=U(0,1)$, and the Beta-Kotz Value-at-Risk is given by
\begin{equation} \label{vu}
VaR\beta_{\alpha}(X)=\alpha.
\end{equation}

\begin{proof}
	For the proof of proposition see Apendix A.4.
\end{proof}

By Abel's theorem, there is no formula describing the roots of any general polynomial of degree $\geq 5$ of its coefficients and elementary operations. However, we can find the roots of both polynomials and the hypergeometric series using an algorithm as in the next section.

\begin{rem}
To find the $CVaR_{\alpha}(X)=\frac{1}{1-\alpha}\int_{\alpha}^{1} VaR_{\nu}(X)d\nu$ simply calculate the integral of the expressions from \ref{V3} to \ref{vu}. For instance, for the uniform typical case when $a=1$ and $b=1$, the $VaR_{\alpha}(X)=\alpha$ then
\begin{equation}
CVaR_{\alpha}(X)=\frac{1}{1-\alpha}\int_{\alpha}^{1}\nu d\nu =\dfrac{1+\alpha}{2}.
\end{equation}
\end{rem}

\begin{rem}
	To find the $EC_{\alpha}(X)=VaR_{\alpha}(X)-E(X)$ simply calculate the subtraction between  the mean of the Beta distribution $\dfrac{a}{a+b}$ and the expressions from \ref{V3} to \ref{vu} . For instance, for the uniform case
	\begin{equation}
	EC_{\alpha}(X)=\alpha-\dfrac{1}{2}.
	\end{equation}
	
\end{rem}

\begin{table}[!h]\label{tableVv}
	\caption{Analytical expressions for risk measures}

	\begin{center}
		\begin{tabular}{|c|c|c|c|c|}
			\hline 	$a$	&  $b$ &  $VaR\beta_{\alpha}(X)$  & $CVaR\beta_{\alpha}(X)$  & $EC\beta_{\alpha}(X)$
			\\
			\hline
				1& 1& $\alpha$	&$ \dfrac{1+\alpha}{2}$ & $\alpha-\dfrac{1}{2}$  \\
			\hline
			2& 1& $\sqrt[2]{\alpha}$&	$\dfrac{2(1-\sqrt{\alpha^{3}})}{3(1-\alpha)}$&	$\sqrt[2]{\alpha}-\dfrac{2}{3}$\\
			\hline
			3& 1& $\sqrt[3]{\alpha}$&$\dfrac{3(1-\sqrt[3]{\alpha^{4}})}{4(1-\alpha)}$&	$\sqrt[3]{\alpha}-\frac{3}{4}$\\
			\hline
			\dots& \dots&\dots &\dots	&\dots	 \\
			\hline
			n& 1& $\sqrt[n+1]{\alpha}$ & $\dfrac{(n+1)(1-\sqrt[n+1]{\alpha^{n+2}})}{(n+2)(1-\alpha)}$	& $\sqrt[n+1]{\alpha}-\dfrac{n+1}{n+2}$	 \\
			\hline
			1&2 &$1 -\sqrt{1-\alpha}$	&$1 -\dfrac{2}{3}\sqrt{1-\alpha}$&$\dfrac{2}{3} -\sqrt{1-\alpha}$\\
			\hline
			
			1 &3& $1+\sqrt[3]{\alpha-1}$ &$1+\dfrac{3\sqrt[3]{\alpha-1}}{4}$ & $\dfrac{3}{4}+\sqrt[3]{\alpha-1}$
			\\
			\hline
			1 &4&  $1 -\sqrt[4]{1-\alpha}$& $1-\dfrac{4\sqrt[4]{1-\alpha}}{5}$&$\dfrac{4}{5} -\sqrt[4]{1-\alpha}$
				\\
			\hline
		\end{tabular}

	\end{center}

\end{table}

Finally, in Table 2 numerical values of the three risk measures are provided, considering different values $a$ and $b$.

\begin{table}[!h]\label{tableVv}
	\caption{Risk Measures for different  $a$ and $b$}
		\begin{center}
		\begin{tabular}{|c|c|c|c|c|}
			\hline 	$a$	&  $b$ &  $VaR\beta_{0.99}(X)$  & $CVaR\beta_{0.99}(X)$  & $EC\beta_{0.99}(X)$
			\\
			\hline
		1& 1& 0.99	& 0.995 &	0.490  \\
		\hline
		2& 1& 0.995&	0.997&	0.325\\
		\hline
		3& 1& 0.997&	0.998&	0.248\\
		\hline
		
		1&2 &0.9	&0.933&	0.567\\
		\hline
		
		2&  2&  0.941&0.960& 0.442 \\
		\hline
		3&  2&0.958& 0.979& 0.357\\
		\hline

		1 &3&  0.785& 0.838& 0.534
		\\
		\hline
		
		2 &3&  0.859& 0.929& 0.460
		\\
		\hline
		
		1 &4&  0.684& 0.747& 0.484
			
		\\
		\hline

	\end{tabular}
	\end{center}
\end{table}

\section{Numerical Solutions}\label{numeric}

In this section we will deal with the numerical calculation of the three risk measures, all estimated from the calculation of the zeros of the hypergeometric function. We will be based on the method of Newton; this method has the advantage that it is easy to understand and has a complete implementation in the routines packages of the R software.  It also has internal routines for evaluating functions hypergeometric see \cite{Fox(2016)}, \cite{Kasper (2017)} and references therein.

The proposed algorithm to compute the $VaR\beta_{\alpha}$,  $CVaR\beta_{\alpha}$ and the $EC\beta_{\alpha}$,  is the following.\\

\textbf{The Algorithm}\\

library(hypergeo)\

library(rootSolve)\

poly:function(a,b,x,s) \{$\frac{\Gamma(a+b)}{\Gamma(a)\Gamma(b)}\frac{x^a}{a}hypergeo(a,1-b,a+1,x)-s$\}

$a\longleftarrow n$

$b\longleftarrow m$

$s\longleftarrow\alpha$

$d\longleftarrow0$

$k\longleftarrow0$

$x\longleftarrow seq(0,1)$

$y\longleftarrow rbeta(x,a,b)$

$fun \longleftarrow\ function (x) Re(poly(a,b,x,s))$

$VaR\beta_{\alpha} \longleftarrow\ uniroot.all(fun, c(0, 1))$

for (i in 1:length(x))\

\ \  \ \  \ \	\{if $(x[i]>VaRB)$\

\ \  \ \  \ \ \ \  \ \  \ \	\{$gg\longleftarrow gg+1$ 	

\ \  \ \  \ \ \ \  \ \  \ \ \ $k<-k+x[i]$\}
\}

$CVaRB\longleftarrow k/gg$

$EC\longleftarrow VaRB-mean(y)$
\\

Similarly to the previous subsection, in Table 3, we provide the Beta-Kotz risk measures. We have considered different values for the parameters $a$ and $b$, as well as we have considered different probability level $\alpha$. Note that the first nine columns marked in bold coincide with the results obtained in Table 1, which were obtained analytically.

\begin{table}[H]\label{tableVv}
	\caption{Risk Measures for different  $a$ and $b$}
	\begin{center}
		\begin{tabular}{|c|c|c|c|c|}
			\hline 	$a$	&  $b$ &  $VaR\beta_{0.99}(X)$  & $CVaR\beta_{0.99}(X)$  & $EC\beta_{0.99}(X)$
			\\
			\hline
				\textbf{1}  &  \textbf{1} & \textbf{0.990} &  \textbf{ 0.995}&  \textbf{0.490} 		
			\\
			\hline
			\textbf{2}  &  \textbf{1}&\textbf{ 0.995}& \textbf{0.9996}&  \textbf{0.328}
			\\
			\hline
			\textbf{3} &   \textbf{1}& \textbf{0.997}&  \textbf{0.996}&  \textbf{0.246}
			\\
			\hline
			
			\textbf{1} & \textbf{2}&  \textbf{ 0.900}& \textbf{0.957}&  \textbf{0.566}
			\\
			\hline
			
			\textbf{2}&  \textbf{2}&  \textbf{0.941}&  \textbf{0.971}&  \textbf{0.442}
			\\
			\hline

			\textbf{3}&  \textbf{2}&  \textbf{0.958}&  \textbf{0.979}&  \textbf{0.357}
			\\
			\hline

			\textbf{1} & \textbf{3}&  \textbf{0.785}&  \textbf{0.929} & \textbf{0.534}
			\\
			\hline

			\textbf{2}&    \textbf{3}& \textbf{0.859} & \textbf{0.929}&  \textbf{0.459}
			\\
			\hline

			\textbf{1} &   \textbf{4}& \textbf{0.684}&  \textbf{0.898}&  \textbf{0.484}
			\\
			\hline
			
			4.1   &1.0& 0.998& 0.999& 0.194\\
			\hline
			5.1 &  1.5& 0.989& 0.995& 0.216\\
			\hline
			4.1 &  4.1& 0.855& 0.933& 0.355\\
			\hline
			5.1 &  5.1& 0.827& 0.923& 0.327\\
			\hline
			6.0&   6.0& 0.806& 0.916& 0.307\\
			\hline
			0.6 &  0.6& 0.999& 0.916& 0.499\\
			\hline
			0.8 &  0.8& 0.996& 0.916& 0.495\\
			\hline
			1.2 & 11.4 &0.355& 0.785& 0.260\\
			\hline
			1.3 & 13.0& 0.330 &0.742 &0.239\\
			\hline
			1.5&  14.1& 0.327& 0.721& 0.231\\
			\hline
			2.0 & 19.0& 0.289& 0.704 &0.193\\
			\hline
			0.5 & 30.0& 0.106& 0.671 &0.090\\
			\hline	
					
		\end{tabular}
	\end{center}
\end{table}

\section{Parameter Estimations}\label{Betafit}

Common methods of estimation of the parameters of the Beta distribution are maximum likelihood and method of moments. The maximum likelihood equations for the Beta distribution have no closed-form solution; estimates may be found through the use of the iterative method, . The method of moments estimators have a closed-form solution. We examine both of these estimators here, see for instance  see \cite{Farnun}, \cite{Wang(2005)} and reference therein.

\subsection{\textbf{Maximum likelihood estimators}}
Assume that an iid sample $x_1,x_2,...,x_n$ of size $n$ has been collected for a random variable $X$ which follows Beta distribution \cite{Wang(2005)}.

$$B(a,b)=\dfrac{\Gamma(a+b)}{\Gamma(a)\Gamma(b)}x_i^{a-1}(1-x_i)^{b-1}$$

The logarithm of the likelihood function is given by
$$log L(a,b,x)=nlog \Gamma(a+b)-nlog\Gamma(a)-nlog\Gamma(b)+(a-1)\sum_{i=1}^{n}log(x_i)+(b-1)\sum_{i=1}^{n}log(1-x_i).
$$

By differentiating with respect to $a,b$ and equating to zero, the likelihood equations can be obtained.
$$ \frac{\partial log L}{\partial a}= n\frac{\Gamma^{'}(a+b)}{\Gamma(a+b)}-n\frac{\Gamma^{'}(a)}{\Gamma(a)}+\sum_{i=1}^{n}log(x_i)=0$$
$$ \frac{\partial log L}{\partial b}= n\frac{\Gamma^{'}(a+b)}{\Gamma(a+b)}-n\frac{\Gamma^{'}(b)}{\Gamma(b)}+\sum_{i=1}^{n}log(1-x_i)=0$$

Using the Newton-Raphson, as described below.

\begin{equation}
\hat{\theta}_{i+1}=\hat{\theta_i}-G^{-1}g,
\end{equation}

where $g$ is the vector of normal equations for wich
$$g=[g_1,g_2]$$
whit
$$g_1=n\psi(a+b)-n\psi\left(a\right)+\sum_{i=1}^{n}log(x_i)$$
$$g_2=n\psi(a+b)-n\psi\left(b\right)+\sum_{i=1}^{n}log(1-x_i)$$
$$G=\left[ \begin{array}{cc}
\frac{\partial g_1}{\partial a} & \frac{\partial g_1}{\partial b} \\
\frac{\partial g_2}{\partial a} & \frac{\partial g_2}{\partial b}
\end{array} \right]$$

$\psi(u)$ and   $\psi'(u)$ are the di- and tri-gamma functions defined as $\psi(u)=\frac{\Gamma^{'}(u)}{\Gamma(u)}$ and $\psi'(u)=\frac{\Gamma^{''}(u)}{\Gamma(u)}-\frac{\Gamma^{'}(u)^{2}}{\Gamma(u)^{2}}$.

\subsection{\textbf{Method of moments estimators}}
The method of moments estimators of the Beta-Kotz distribution parameters involve equating of the two moments of the Beta-Kotz distribution with the sample mean and the variance, see \cite{Farnun}.

The moment generating function for a moment of order $t$ is
$$E(X^t)=\int_{0}^{1}{\frac{\Gamma(a+b)}{\Gamma\left(a\right)\Gamma\left(b\right)}}x^{a-1}(1-x)^{b-1}dx$$
$$=\dfrac{\Gamma(a+t)\Gamma(a+b)}{\Gamma((a+b+t)\Gamma(a)}$$

The method of moments estimators are found by setting the sample mean, $\bar{X}$, the  variance, $S^2$, the equal to the population mean, variance.

$$\bar{X}=\frac{a}{a+b}$$

$$S^{2}=\frac{ab}{(a+b)^2(a+b+1)}$$

Then
\begin{equation}
\hat{a}=\bar{X}\left(\dfrac{\bar{X}(1-\bar{X})}{S^{2}}-1\right)
\end{equation}
\begin{equation}
\hat{b}=(1-\bar{X})\left(\dfrac{\bar{X}(1-\bar{X})}{S^{2}}-1\right)
\end{equation}

\section{Measuring The Credit Risk}

Daily, the financial institutions are exposed to different risk factors that threaten business continuity and the stability of the global financial system. Risk can be defined as the degree of uncertainty about future net returns. The literature distinguishes four main types of risk. credit risk is the risk of losses resulting from failure by its financial counterparties to meet their obligations. Market risk relates to uncertainty of future earnings, due to the changes in market conditions. Liquidity risk occurs when the entity is unable to adequately fulfil its obligations due to lack of liquid resources. Operational risk relates to the loss due to failures in the procedures that affect the operation of the entity. The most important of this risk is credit risk since this is the core of the business in financial institutions. It has three basic components: credit exposure, the probability of default and loss in the event of default. In this section, we apply the measures obtained in section \ref{Valuerisk}, namely: Value-at-Risk, Conditional Value-at-Risk and  Economic Capital over the loss in the event of default.

As in \cite{Ward(2002)}, in our model, we work with the credit loss rate R, which is the total credit loss C of the institution divided by the total exposure and we have chosen to use a Beta distribution to model the portfolio of loans.

In case of a portfolio with $N$ borrows, the portfolio loss variable $L=L_{N}$ is defined in \cite{Lutkebohmert(2014)} as

$$L_{N} = \sum_{n=1}^{N}EAD_n\cdot LGD_n\cdot PD_n$$
where

$L_N$: Loss is the amount that an institution is contractually owed but does not receive because of the borrower or borrower defaulting.

$EAD_n$: Exposure at default is the total amount of the institution's liability to a borrower.

$LGD_n$:Loss Given Default is the fraction of the exposure that is actually lost given a default of that borrower.

$PD_n$: Probability of Default is the binomially distributed Bernoulli random variable that measures whether a borrower has defaulted or not. It takes the values of either one in the case of default, or zero otherwise.

The expected credit loss is the average annual loss rate over the course of a business cycle.  Because the default is modelled as Bernoulli and it does not allow firms to default repeatedly without curing, the sum of a correlated portfolio of loans follows a Beta-Kotz distribution.

\subsection{Implementation}

We will consider the consumer portfolio model designed by the Superintendencia Financiera de Colombia (SFC), which is used in Colombia for the evaluation and supervision of internal models presented by financial institutions.

In the case of SFC there are 6 rating categories, the highest credit quality being AA, and the lowest, CC; the last state is the default. Default corresponds to the situation where an obligor cannot make a payment related to a loan obligation, whether it is a coupon or the redemption of principal.

For the financial institutions in study, only default risk is modeled, not downgrade risk. As in \cite{Crouhy et al. (2000)} it is assumed that:
\begin{itemize}
	\item for a loan, the probability of default in a given period, say 1 month, is the same for any other month.
	\item for a large number of obligors, the probability of default by any particular 	obligor is small, and the number of defaults that occur in any given period is independent of the number of defaults that occur in any other period.
\end{itemize}

The distribution of default losses for a portfolio is derived in three stages:

\begin{itemize}
	\item Probability of Default (PD): It corresponds to the probability that in a lapse of twelve (12) months the borrower of a certain segment and qualification of the consumer portfolio will default. The Probability of Default is chosen from a rating system (Table 7) proposed by the SFC.	 
	
	\item Loss given default (LGD): Is the loss in the event of default of an obligor, the counterparty incurs a loss equal to the amount owned by the obligor (the exposure, i.e., the marked-to-market value if positive, and zero otherwise, at the time of default) less a recovery amount. The Loss given default is chosen from the SFC (Table 8).
	
	\item Distribution of default losses for a portfolio: In order to derive the loss distribution for a well-diversified porfolio, the distribution of default losses is calculed as the exposure times probability of defualt times the loss given default rate.
\end{itemize}

\begin{center}
	\begin{table}[H]\label{PD}
		\caption{ Probability of Default from SFC }
		\medskip
		\hspace{1.2cm}
		\begin{tabular}{|c|c|c|c|c|c|}
			\hline
			Rating&  Automobiles&	Other& Credit card& CFC Automobiles&
			CFC Other  \\
			\hline
			AA	&0,97\% &	2,10\%	&1,58\%&1,02\% &	3,54\%  \\
			\hline
			A&	3,12\%&	3,88\%&	5,35\%&	2,88\%&	7,19\%
			\\
			\hline
			BB&	7,48\%&	12,68\%&	9,53\%&	12,34\%&	15,86\% \\
			\hline
			B &	15,76\%&	14,16\%&	14,17\%&	24,27\%&	31,18\%   \\
			\hline
			CC&	31,01\%&	22,57\%&	17,06\%&	43,32\%&	41,01\%  \\
			\hline
			Default &100.0\% &100.0\% &100.0\% &100.0\% &100.0\% \\
			\hline	
		\end{tabular}
	\end{table}
\end{center}

\begin{table}[H]\label{LGD}
	\caption{ Loss given default from SFC }
	\medskip
	\hspace{2cm}
	\begin{tabular}{|c|c|c|c|c|c|}
		\hline
		Type of Guarantee& PD &Days &PD &  Days &PD  \\
		\hline
		Admissible Guarantee  & & & & & \\
		\hline
		
		Admissible financial collateral&	0-12&	-&	-&	-&	-\\
		\hline
		Commercial and residential real estate&	40\%&	360&	70\%&	720&	100\% \\
		\hline
		Goods given in real estate leasing&	35\%&	360&	70\%&	720&	100\% \\
		\hline 	
		Goods given in leasing other than real estate&	45\%&	270&	70\%&	540&	100\% \\
		\hline
		Receivables&	45\%&	360&	80\%&	720&	100\%  \\    	\hline 	
		Other Guarantees suitable &	50\%&	270&	70\%&	540	& 100\% \\    	\hline
		No-admissible Guarantee &	60\%&	210&	70\%&	420& 100\%	 \\    	\hline
		No-Guarantee&	75\%&	30&	85\%&	90&	100\%     \\    	\hline
	\end{tabular}
\end{table}

The financial institutions in the study hold a portfolio of loans from almost 14.000 different obligors. In Table 9, we show the Rating, the exposure at default, the probability of default, the loss given default, the credit loss and the credit loss rate for some obligors.

\begin{center}
	\begin{table}[H]\label{Creditl}
		\caption{ Credit loss and Credit loss rate }
		\medskip
		\hspace{1.2cm}
		\begin{tabular}{|c|c|c|c|c|c|}
			\hline
			Rating&EAD  &PD &LGD  & Credit loss & Credit loss rate  \\
			\hline
			CC	& \$ 391,967& 	22.57\%&	60\%&	 \$ 53,080 &	0.00728\% \\
			\hline
			AA&	 \$ 9,725,044& 	2.10\%&	60\%&	 \$ 122,536& 	0.01681\%   \\
			\hline
			CC&	 \$ 1,327,760& 	22.57\%	&60\%&	\$ 179,805& 0.02466\% \\
			\hline
			CC	&  \$ 1,134,433 &22.57\%&	60\% &	\$ 153,625 & 	0.02107\%   \\
			\hline
			CC	& \$ 296,882 &	22.57\%&	60\% &	 \$ 40,204 	&0.00551\%     \\
			\hline
			CC&	 \$ 708,982 &	22.57\%	&60\% &	\$96,010 &	0.01317\%      \\
			\hline
			CC&	 \$ 71,606& 	22.57\%&	60\%&	 \$ 9,697 &	0.00133\%      \\
			\hline
			CC	& \$ 1,079,607 &	22.57\% &	60\%	 & \$ 146,200 	&0.02005\%   \\
			\hline
			CC&	 \$ 626,049& 	22.57\% &	60\% &	\$ 84,780 &	0.01163\%  \\
			\hline
			
			CC&	 \$ 1,781,217 &	22.57\%&	65\%	& \$ 261,313& 	0.03584\% \\
			\hline
			CC	& \$ 1,465,135 &	22.57\%	&60\%	& \$ 198,409 &	0.02721\%   \\
			\hline
			CC	& \$ 779,251 & 	22.57\%&	60\%	& \$ 105,526 &	0.01447\%  \\
			\hline
			
			B	& \$ 200,175& 	14.16\%&	65\%	& \$ 18,424&0.00253\%		\\
			\hline 												
			BB&	 \$ 297,777 &12.68\%&	60\% & \$ 22,655 &	0.00311\%		\\
			\hline 																					
			BB	 &\$ 342,253 	&12.68\%&	60\%	& \$ 26,039 & 0.00357\%		\\
			\hline 										
			CC	& \$ 139,452& 	22.57\%	&60\%	& \$ 18,885 &	0.00259\%	 	\\
			\hline 																		
			AA&	 \$ 314,744 &	2.10\%&	60\%	& \$ 3,966 &	0.00054\%		\\
			\hline 																
			...&...&...&...&...&...
			\\
			\hline
		\end{tabular}
	\end{table}
\end{center}

After calculating the loss rate  derived from the credit risk, the Beta-Kotz  distribution is fit using the method of moments estimators see \cite{Farnun}, where

\begin{equation}
\hat{a}=\bar{X}\left(\dfrac{\bar{X}(1-\bar{X})}{S^{2}}-1\right)
\end{equation}
\begin{equation}
\hat{b}=(1-\bar{X})\left(\dfrac{\bar{X}(1-\bar{X})}{S^{2}}-1\right)
\end{equation}

The values for the parameters of the Beta-Kotz distribution of each month analyzed are shown in Table 10.

\begin{table}[H]\label{fit}
	\caption{ Parameter estimation}
	\medskip
	\hspace{0.0001cm}
	\begin{tabular}{|c|c|c|c|c|c|c|c|c|c|c|c|c|c|}
		\hline
		& Jan.&Feb.&Mar.&Abr.&May. &Jun.&Jul.&Ago.&Sep.&Oct.&Nov.&Dic.
		\\
		\hline
		$a$&0.199	& 0.201& 0.190	&0.208&0.205& 0.186&0.191& 0.185&0.184&0.181& 0.190&0.178
		\\
		\hline
		$b$ & 30.63&30.81& 31.75	&31.97&30.96&28.63&29.40&28.38&28.27&27.12&26.76&25.36	\\ 		\hline
	\end{tabular}
\end{table}

Finally, Table 11 shows the risk measures calculated using the approaches developed in the Section (\ref{numeric}). On this table can be seen that an average for the 2017 year, the total Expected credit loss was \$ 720.495.087 with a maximum loss of  \$ 2.185.686.058 to 99\% of confidence, the economic capital needed as a buffer against unexpected losses  was \$1.465.190.972, and on average, higher losses could be reported in the amount of \$4.787.439.041.

\begin{center}
	\begin{table}[H]\label{Measures} 
		\caption{ Measures Credit Risk }
		\medskip
		\hspace{2cm}
		\begin{tabular}{|c|c|c|c|c|c|}
			\hline 
			Month &$E(X)$	&$VaR_{\alpha}(X)$ &$EC_{\alpha}(X)$ &$CVaR_{\alpha}(X)$  \\ 
			\hline
			Jan. & \$ 729.135.016
			&\$ 2.227.701.427& \$ 1.498.566.453&  \$ 4.836.908.886
			\\ 
			\hline
			
					Feb.& \$ 733.684.644 & \$ 2.241.519.936
			&\$   1.507.835.292 & \$ 4.859.629.904	
			
			\\ 
			\hline 
			Mar.& \$ 729.848.669
			&\$  2.047.888.348& \$ 1.318.039.679  &\$ 4.544.330.501

			\\ 
			\hline
			Abr.& \$737.507.092
			&\$ 2.251.881.449
			& \$ 1.514.374.358 	& \$ 4.849.257.893
			\\ 
			\hline
			
			May.&   \$  732.118.086
			&\$ 2.234.220.031
			& \$ 1.502.101.945  & \$ 4.793.813.506
			\\ 
			\hline 
			Jun.&  \$  734.764.136
			&	\$ 2.244.531.943
			& \$ 1.509.767.807   & \$ 4.918.483.683
			\\ 
			\hline 
			Jul.& \$ 727.331.803
			& \$ 2.222.288.246
			& \$ 1.494.956.443 	& \$ 4.863.703.380
			\\ 
			\hline 
			Ago.& \$ 729.925.682
			&\$ 2.229.425.636
			&\$ 1.499.499.954  	& \$ 4.926.333.695
			\\ 
			\hline 
			Sep.& \$  737.451.217
			&\$ 2.252.358.797
			&\$ 1.514.907.580
			&\$ 4.967.022.930   
			\\ 
			\hline 
			Oct.&\$ 748.696.016
			&\$ 2.285.068.627
			&\$  1.536.372.610&\$ 5.082.848.683
			\\ 
			\hline 
			Nov.&\$ 654.108.600
			& \$ 2.001.390.110
			&\$ 1.347.281.510  &\$ 4.417.603.102
			\\ 
			\hline 
			Dic.&\$  651.370.077
			& \$1.989.958.149
			& \$ 1.338.588.072   &\$  4.389.332.331
			\\ 
			\hline 
			
		\end{tabular} 
	\end{table}
\end{center}

\section{Appendix}

\textbf{A.1} \textbf{Proof of Proposition \ref{1} }
\begin{proof}  The joint density of $A$ and $B$ is
	\begin{equation*}
	f_{A,B}(A,B)=\frac{\pi^{\frac{n_{1}}{2}+\frac{n_{2}}{2}}}{\Gamma\left(\frac{n_{1}}{2}\right)\Gamma\left(\frac{n_{2}}{2}\right)
		\Sigma^{\frac{n_{1}}{2}+\frac{n_{2}}{2}}}A^{\frac{n_{1}}{2}-1}B^{\frac{n_{2}}{2}-1}
	h_{1}\left(\Sigma^{-1}A\right)h_{2}\left(\Sigma^{-1}B\right)dAdB.
	\end{equation*}
	Take $C=A+B$, $A=T^2X$ and $C=T^2$, where $T>0$, so $C-A=T^2(1-X)$.
	and $dAdB=T^2dXdT^2$.
	
	Then the joint density function of $T^2$ and $X$ is
	\begin{equation*}
	\frac{\pi^{\frac{n_{1}}{2}+\frac{n_{2}}{2}}x^{\frac{n_{1}}{2}-1}(1-x)^{\frac{n_{2}}{2}-1}(t^2)^{\frac{n_{1}}{2}+\frac{n_{2}}{2}-1}}{\Gamma\left(\frac{n_{1}}{2}\right)\Gamma\left(\frac{n_{2}}{2}\right)
		\Sigma^{\frac{n_{1}}{2}+\frac{n_{2}}{2}}}
	h_{1}\left(\Sigma^{-1}t^{2}x\right)h_{2}\left(\Sigma^{-1}t^{2}(1-x)\right)dt^2dx
	\end{equation*}
	Assuming that the elliptical generator $h_{2}(w)$ has a Taylor expansion, we obtain
	\begin{eqnarray*}
		f_{T^{2},X}(t^{2},x)&=&   \frac{\pi^{\frac{n_{1}}{2}+\frac{n_{2}}{2}}x^{\frac{n_{1}}{2}-1}(1-x)^{\frac{n_{2}}{2}-1}}
		{\Gamma\left(\frac{n_{1}}{2}\right)\Gamma\left(\frac{n_{2}}{2}\right)
			\Sigma^{\frac{n_{1}}{2}+\frac{n_{2}}{2}}}\\
		&&\times\sum_{k=0}^{\infty}\frac{h_{2}^{k}(0)\Sigma^{-k}}{k!(1-x)^{-k}}
		(t^2)^{\frac{n_{1}}{2}+\frac{n_{2}}{2}-1+k}
		h_{1}\left(\Sigma^{-1}t^{2}x\right)dt^2dx
	\end{eqnarray*}
	The required density follows by integration over $T^{2}>0$. This integral is not trivial and  must be recovered from the following general result over positive definite matrices given by \cite{Caro2010}:
	
	\begin{equation*}
	\int_{\mathbf{X}>\mathbf{0}} h(tr \mathbf{X}\mathbf{Z})|\mathbf{X}|^{a-(m+1)/2}(d\mathbf{X}) =\frac{\Gamma_{m}(a)}{\Gamma(ma)}|\mathbf{Z}|^{-a}
	\int_{0}^{\infty}h(w)w^{ma-1}dw.
	\end{equation*}

	Taking $m=1$, $a=\frac{n_{1}}{2}+\frac{n_{2}}{2}+k$ and $Z=\Sigma^{-1}X$, $X=T^{2}$, we have the required result.
\end{proof}

\textbf{A.2}  \textbf{Proof of Theorem \ref{t2}}
\begin{proof} By definition (\ref{VaR}), if $X\sim KBeta(n_1,n_2,t_1,t_2)$, then the $VaR\beta_{\alpha}(X))$ is a real number,  such that
	$$F_X(VaR\beta_{\alpha}(X))=P(X\leq VaR\beta_{\alpha}(X))=\alpha.$$
	
	Equivalently
	
	$$\dfrac{\Gamma\left(t_1+t_2+\frac{n_1}{2}+\frac{n_2}{2}-2\right)}{\Gamma\left(t_1+\frac{n_1}{2}-1\right)\Gamma\left(t_2+\frac{n_2}{2}-1\right)}\int_{0}^{VaR\beta_{\alpha}(X)}x^{\left(t_1+\frac{n_1}{2}-1\right)-1}(1-x)^{\left(t_2+\frac{n_2}{2}-1\right)-1}dx=\alpha.$$
	
	By the incomplete Beta function´s definition, if $ t_1+\frac{n_1}{2}\neq 0,-1,-2,...$ the previous equation is equivalent to,
	\begin{equation}\label{w}
	C\frac{VaR\beta_{\alpha}(X)^{t_1+\frac{n_1}{2}-1}}{(t_1+\frac{n_1}{2}-1)}{}_2F_{1}\left(t_1+\frac{n_1}{2}-1,-t_2-\frac{n_2}{2}+2;  t_1+\frac{n_1}{2},VaR\beta_{\alpha}(X)\right)-\alpha=0
	\end{equation}
\end{proof}

\textbf{A.3}	\textbf{Proof of Theorem \ref{t3}}
\begin{proof}
	Let $$f(VaR\beta_{\alpha}(X))=\dfrac{\Gamma(a+b)}{\Gamma(a)\Gamma(b)}\sum_{k=0}^{\infty}\dfrac{(1-b)_{k}}{(a+k)k!} VaR\beta_{\alpha}(X))^{a+k}$$
	
	Thus
	$$f'(VaR\beta_{\alpha}(X))=\dfrac{\Gamma\left(a+b\right)}{\Gamma\left(a\right)\Gamma\left(b\right)}\sum_{k=0}^{\infty}\dfrac{(1-b)_{k}}{k!} VaR\beta_{\alpha}(X)^{a+k-1}$$
	
	$$=\dfrac{\Gamma\left(a+b\right)}{\Gamma\left(a\right)\Gamma\left(b\right)}VaR\beta_{\alpha}(X)^{a-1} (1-VaR\beta_{\alpha}(X))^{b-1}$$
	
	And
	$$f''(VaR\beta_{\alpha}(X))=\dfrac{\Gamma\left(a+b\right)}{\Gamma\left(a\right)\Gamma\left(b\right)}VaR\beta_{\alpha}(X)^{a-2} (1-VaR\beta_{\alpha}(X))^{b-2}\left[(a-1)-VaR\beta_{\alpha}(X)\left(a+b-2\right)\right]$$
	
	In general, if $h(VaR\beta_{\alpha}(X))=f'(VaR\beta_{\alpha}(X))$, then  $h^{n}(VaR\beta_{\alpha}(X))$ for $n=0,1,...$  is given by
	
	\begin{eqnarray*}
		&&\dfrac{\Gamma\left(a+b\right)}{\Gamma\left(a\right)\Gamma\left(b\right)}VaR\beta_{\alpha}(X)^{a-1-n} (1-VaR\beta_{\alpha}(X))^{b-1-n}
		\\\times
		&&\sum_{k=0}^{n}\binom{n}{k}
		\left(b-k\right)_{k}\left(a-(n-k)\right)_{n-k} VaR\beta_{\alpha}(X)^{k}(1-VaR\beta_{\alpha}(X))^{n-k}
	\end{eqnarray*}
	
	Thus $VaR\beta_{\alpha}(X)=0$ and $VaR\beta_{\alpha}(X)=1$ are the unique critical points and $VaR\beta_{\alpha}(X)=0$, $VaR\beta_{\alpha}(X)=1$ and $VaR\beta_{\alpha}(X)=\dfrac{a-1}{a+b-2}$ are the inflection points of the function.
	
	Given that $f(VaR\beta_{\alpha}(X))$ is a continuous function over a closed bounded interval $[0,1]$ and $f(0)=1-\alpha<0$ and $f(1)=1-\alpha>0$, then by continuity, there is at least one root in $(0,1)$.  Suppose that $f(VaR\beta_{\alpha}(X))$  has two zeros in $(0,1)$, which we will call $x_1$ and $x_2$, by Rolle's theorem, there exists at least one point c belonging to the interval $(x_1,x_2)$ such that $f'(c) = 0$, ie $c$ is a critical point, which is a contradiction since $0$ and $1$ are the unique critical points in $(0,1)$.
	
\end{proof}

\textbf{A.4} \textbf{Proof of Proposition \ref{p1}}
\begin{proof} 
	\begin{enumerate}
		\item As $b=2$ then the equation (\ref{m}) is equivalent to
		\begin{equation}\label{a1}
		\left(a+1\right)VaR\beta_{\alpha}(X)^{a}-a VaR\beta_{\alpha}(X)^{a+1}-\alpha=0.
		\end{equation}	
		
		\begin{itemize}
			\item If $a=b-1$, the  polynomial in  (\ref{a1}) is reduced to
			$-VaR\beta_{\alpha}(X)^{2}+2VaR\beta_{\alpha}(X)-\alpha$ whose zeros are given by
			$VaR\beta_{\alpha}(X)=1\pm \sqrt{1-\alpha}$. As $0<\alpha<1$ then $1-\alpha>0$, thus $1- \sqrt{1-\alpha}\in (0,1).$\\
			
			\item If $a=b$ the  polynomial in (\ref{a1}) is reduced to $-2VaR\beta_{\alpha}(X)^{3}+3VaR\beta_{\alpha}(X)^{2}-\alpha$ whose real zero is given by $VaR\beta_{\alpha}(X)= \frac{1}{2}+\frac{1-\sqrt{-3}}{4(-1+2\alpha+2\sqrt{-a+a^2})^{\frac{1}{3}}} +\dfrac{1}{4}(1+\sqrt{-3})(-1+2\alpha+2\sqrt{-a+a^2}) ^{\frac{1}{3}}$.\\

			\item If $a=b+1$ the  polynomial (\ref{a1}) is reduced to  $-3VaR\beta_{\alpha}(X)^{4}+4VaR\beta_{\alpha}(X)^{3}-\alpha$ whose real zeros are given by 	
			$VaR\beta_{\alpha}(X)=\frac{1}{6}\left[2+\sqrt{4+\frac{6(\alpha+Q^{2})}{R}}\pm \sqrt{2}\sqrt{4-\frac{3\alpha}{Q}}\right]+\frac{1}{6}\left[-3Q+\frac{4\sqrt{2}}{\sqrt{2+\frac{3(\alpha+R^{2})}{R}}}\right].$ Where, $Q=(\alpha-\sqrt{-(-1+\alpha)\alpha^{2}})^{\frac{1}{3}}$ and $R=(\alpha-\sqrt{-(-1+\alpha)\alpha})^{\frac{1}{3}}$, whose real zero in $(0,1)$ is given by
			
			$$\frac{1}{6}\left[2+\sqrt{4+\frac{6(\alpha+Q^{2})}{R}}- \sqrt{2}\sqrt{4-\frac{3\alpha}{Q}}\right]+\frac{1}{6}\left[-3Q+\frac{4\sqrt{2}}{\sqrt{2+\frac{3(\alpha+R^{2})}{R}}}\right].$$
		\end{itemize}
		
		\item   As $b=3$ then (\ref{m}) is equivalent to
		\begin{equation}\label{g}
		\dfrac{1}{2}\left(a+1\right)aVaR\beta_{\alpha}(X)^{a+2}-\left(a+2\right)aVaR\beta_{\alpha}(X)^{a+1}+\frac{1}{2}\left(a+1\right)\left(a+2\right)VaR\beta_{\alpha}(X)^{a}-\alpha=0.
		\end{equation}
		
		\begin{itemize}
			\item If $a=b-2$, the  polynomial (\ref{g}) is reduced to
			$VaR\beta_{\alpha}(X)^{3}-3VaR\beta_{\alpha}(X)^{2}+3VaR\beta_{\alpha}(X)-\alpha$  whose real zero is given by $VaR\beta_{\alpha}(X)=1+\sqrt[3]{\alpha-1}.$
			
			\item If $a=b-1$, the  polynomial (\ref{g}) is reduced to
			$3VaR\beta_{\alpha}(X)^{4}-8VaR\beta_{\alpha}(X)^{3}+6VaR\beta_{\alpha}(X)^{2}-\alpha$ whose real zeros are given by
			
			{\small $$\dfrac{2}{3}\pm\dfrac{1}{2}\sqrt{\left[\dfrac{8}{9}+\dfrac{16}{27\sqrt{\dfrac{4}{9}+\dfrac{2}{3}SP+\dfrac{2(1-\alpha)}{3SP}}}-\dfrac{2}{3}SP-\dfrac{2(1-\alpha)}{3SP}\right]}-\dfrac{1}{2}\sqrt{\dfrac{4}{9}+\dfrac{2}{3}SP+\frac{2(1-\alpha)}{3SP}},$$}
			
			where $S=(-1+\sqrt{a})^{\frac{2}{3}}$ and $P=(1+\sqrt{a})^{\frac{1}{3}}$, then the $VaR\beta_{\alpha}(X)\in(0,1)$
			
			{\small $$\dfrac{2}{3}+\dfrac{1}{2}\sqrt{\left[\dfrac{8}{9}+\dfrac{16}{27\sqrt{\dfrac{4}{9}+\dfrac{2}{3}SP+\dfrac{2(1-\alpha)}{3SP}}}-\dfrac{2}{3}SP-\dfrac{2(1-\alpha)}{3SP}\right]}-\dfrac{1}{2}\sqrt{\dfrac{4}{9}+\dfrac{2}{3}SP+\frac{2(1-\alpha)}{3SP}}$$}
			
		\end{itemize}
		
		\item As $b=4$ and  $a=b-3$ then (\ref{m}) is equivalent to
		$$-VaR\beta_{\alpha}(X)^{4}+4VaR\beta_{\alpha}(X)^{3}-6VaR\beta_{\alpha}(X)^{2}+4VaR\beta_{\alpha}(X)-\alpha=0,$$
		whose real zeros are given by  $VaR\beta_{\alpha}(X)=1\pm\sqrt[4]{\alpha-1}.$
		
		As $0<\alpha<1$ then $1-\alpha>0$, thus $VaR\beta_{\alpha}(X)=1-\sqrt[4]{\alpha-1}\in(0,1).$	
		
		\item As $b=1$ then (\ref{m}) is equivalent to  $VaR\beta_{\alpha}(X)^{n+1}-{\alpha}=0$ then  $VaR\beta_{\alpha}(X)=\pm\sqrt[n+1]{\alpha}$, thus $VaR\beta_{\alpha}(X)=\sqrt[n+1]{\alpha}\in(0,1).$	
	\end{enumerate}	
\end{proof}

\section{Concluding Remarks}

This work defines and studies the so called Beta Kotz distribution, a flexible robust model based on a family of distributions including the Gaussian law. The univariate  analysis developed in the paper emerged from a matrix-variate theory based on elliptical models which are usually out of the discussions in the classical univariate statistical books. 

This paper derives the so called Beta Kotz distribution based on a general family of distributions including the classical Gaussian model. Some properties and estimation methods are also provided.

The paper also develops efficient computational procedures and analytical solutions for estimating  some risk measures based on loss distributions on the credit risk framework. Under parameter restrictions, some  analytical expressions can be found, in the other cases, we introduce a numerical algorithm which allows for the computation of this risk measures.

Our results includes the proof for the unique zero of a particular Gauss hypergeometric function then a number of consequences can be derived for computation of the risk measures in different scenarios.

The methods developed here may be applicable to others distribution functions whose risk measures are defined in terms of hypergeometric functions, such as the Gamma, Pearson, Chi-Square, Studen t, among others.

The philosophy of the method can be easily extended to higher dimensions in order to study the VaR in the matrix variate setting. This topic is under research.

\section*{Acknowledgments}
The authors wish to express their gratitude to Doctorate in Mathematics (Doctoral School of Mathematics, IT and Telecommunications, University of Toulouse, Toulouse, France) and to Doctorate in Modelling and Scientific Computing (Faculty of Basic Sciences, University of Medellin, Medellin, Colombia).

\end{document}